\pgfplotsset{compat=1.10}
\newtheorem{theorem}{Theorem}
\newtheorem{definition}{Definition}
\newtheorem{proposition}{Proposition}
\newtheorem{lemma}{Lemma}
\newtheorem{remark}{Remark}
\newtheorem{standing}{Standing Assumption}
\newcommand{\bs}{\boldsymbol}
\newcommand{\bomega}{\boldsymbol{\omega}}
\newcommand{\bx}{\boldsymbol{x}}
\newcommand{\by}{\boldsymbol{y}}
\newcommand{\bgamma}{\boldsymbol{\gamma}}
\newcommand{\cA}{\mathcal{A}}
\newcommand{\cB}{\mathcal{B}}
\newcommand{\cC}{\mathcal{C}}
\newcommand{\norm}[1]{\left\|#1\right\|}
\newcommand{\R}{\mathbb{R}}
\newcommand{\Ncal}{\mathcal{N}}
\newcommand{\mc}{\mathcal}
\newcommand{\A}{\mc{A}}
\newcommand{\dom}{\operatorname{dom}}
\newcommand{\argmin}{\operatorname{argmin}}
\newcommand{\fix}{\mathrm{fix}}
\newcommand{\proj}{\mathrm{proj}}
\newcommand{\Id}{\mathrm{Id}}
\newcommand{\Res}{\operatorname{J}}
\newcommand{\diag}{\operatorname{diag}}
\newcommand{\col}{\operatorname{col}}
\newcommand{\zer}{\operatorname{zer}}
\newcommand{\nc}{\mathrm{N}}
\newcommand{\0}{\mathbf{0}}
\newcommand{\1}{\mathbf{1}}
\newcommand{\Rmnum}[1]{\expandafter\@slowromancap\romannumeral #1@}
\begin{document}

\title{A Douglas--Rachford Splitting for Semi-decentralized Equilibrium Seeking in Generalized Aggregative Games}

\author{Giuseppe Belgioioso \and Sergio Grammatico 
\thanks{G. Belgioioso is with the Control Systems group, TU Eindhoven, The Netherlands. S. Grammatico is with the Delft Center for Systems and Control (DCSC), TU Delft, The Netherlands. 
E-mail addresses: \texttt{g.belgioioso@tue.nl}, \texttt{s.grammatico@tudelft.nl}. 
This work was partially supported by NWO under research projects OMEGA (grant n. 613.001.702) and P2P-TALES (grant n. 647.003.003).
\smallskip \newline
}
}

\thispagestyle{empty}
\pagestyle{empty}

\maketitle         

\begin{abstract}
We address the generalized aggregative equilibrium seeking problem for noncooperative agents playing average aggregative games with affine coupling constraints.
First, we use operator theory to characterize the generalized aggregative equilibria of the game as the zeros of a monotone set-valued operator. Then, we massage the Douglas--Rachford splitting to solve the monotone inclusion problem and derive a single layer, semi-decentralized algorithm whose global convergence is guaranteed under mild assumptions. The potential of the proposed Douglas--Rachford algorithm is shown on a simplified resource allocation game, where we observe faster convergence with respect to forward-backward algorithms.
\end{abstract}

\section{Introduction}

\subsection*{Aggregative games in societal challenges}

An aggregative game is a collection of inter-dependent optimization problems associated with noncooperative decision makers, or agents, where each agent is affected by some aggregate effect of all the agents \cite{kukushkin:04,jensen:10}. Remarkably, aggregative games arise in several societal challenges, such as demand side management in the smart grid \cite{Saad2012}, e.g. for charging/discharging of electric vehicles \cite{ma:callaway:hiskens:13, parise:colombino:grammatico:lygeros:14, ma:zou:ran:shi:hiskens:16, grammatico:16cdc-pev}, demand-response regulation in competitive markets \cite{li2015demand}, congestion control in traffic and communication networks \cite{barrera:garcia:15}. The common denominator is in fact the presence of a large number of selfish agents, whose aggregate actions may disrupt the shared infrastructure, e.g. the power or the transportation network, if left uncontrolled.

\subsection*{Computational game theory and monotone operator theory}

Designing solution methods for multi-agent equilibrium problems in aggregative games has recently gained high research interest. A fast-growing literature has been in fact developing semi-decentralized and distributed algorithms for aggregative games without coupling constraints \cite{grammatico:parise:colombino:lygeros:16,koshal:nedic:shanbhag:16}, and semi-decentralized algorithms for aggregative games with coupling constraints \cite{grammatico:17,belgioioso:grammatico:17cdc}. With focus on the generalized Nash equilibrium (GNE) problem, the formulations in \cite{belgioioso:grammatico:17cdc, yi2017distributed} have shown an elegant approach based on monotone operator theory \cite{bauschke2017convex} to characterize the equilibrium solutions as the zeros of a monotone operator. Not only is the monotone-operator-theoretic approach general -- e.g., unlike variational inequalities, smoothness of the cost functions is not required -- but also computationally viable, since several design strategies to solve monotone inclusions are already well established, e.g.\ operator-splitting methods \cite[\S 26]{bauschke2017convex}. 


\subsection*{Critical review of solution algorithms for aggregative games}

Few algorithms are available in the literature for solving certain sub-classes of monotone aggregative games with coupling constraints, each with important technical or computational limitations. Specifically, forward-backward (FB) methods, that include projected-pseudo-gradient methods, require the pseudo-subdifferential mapping to be cocoercive \cite{belgioioso:grammatico:18ecc}, strictly \cite{koshal:nedic:shanbhag:16,liang2017distributed} or strongly monotone \cite{paccagnan:16,yi2017distributed}. To be applicable to (aggregative) games with (non-cocoercive, non-strictly) monotone pseudo-subdifferential mapping, the FB method shall be augmented with a vanishing regularization. This approach is known as iterative Tikhonov regularization (ITR) and generates a FB algorithm with double-layer vanishing step sizes \cite{kannan2012distributed}, where the actual step size shall vanish faster than the vanishing regularization term \cite[(A2.2), \S 2.1]{kannan2012distributed}. In practice, however, methods based on (double-layer) vanishing step sizes typically have slow speed of convergence, which is computationally undesirable. To solve (non-cocoercive, non-strictly) monotone aggregative games via non-vanishing iterative steps, the forward-backward-forward (FBF) method adds an additional forward step to the FB algorithm. Unfortunately, FB, ITR and FBF methods require the cost functions of the agents to have the non-differentiable part separable and dependent on the local decision variable only \cite[\S II.A]{belgioioso:grammatico:17cdc}. Recently, the preconditioned proximal-point (PPP) method was proposed to solve monotone (aggregative) games, virtually with no additional technical assumption other than monotonicity of the pseudo-subdifferential mapping \cite{yi2018distributed}. Unfortunately, however, the PPP method generates a double-layer algorithm, in which each (outer) iteration involves the solution of a sub-game without coupling constraints, via nested (inner) iterations. Together with vanishing step sizes, we can regard double-layer or nested iterations as a computational limitation.

\subsection*{Why the Douglas--Rachford splitting?}
Essentially, none of the currently available algorithms is suitable for efficiently computing a GNE in (general) monotone aggregative games.
The Douglas--Rachford (DR) splitting \cite[\S 26.3]{bauschke2017convex} has instead the potential to overcome the technical limitations of the forward-backward and forward-backward-forward methods and the computational drawbacks of the iterative-regulation and proximal-point methods.
Our motivation for studying the DR splitting is partially inspired by the celebrated Alternating Direction Method of Multipliers (ADMM) in distributed optimization, which is a special  implementation of the DR splitting method \cite{giselsson2017linear}.

\subsection*{Contribution of this paper}

Unfortunately, the direct application of the DR splitting on the classic monotone operator that defines the GNE problem does not generate a suitable solution algorithm. The second main technical difficulty is that the algorithms generated by the standard DR splitting method are not semi-decentralized. On the contrary, since we consider aggregative games, we require the structure of the computation and information exchange to be semi-decentralized, which includes the requirement that coordination signals for the agents shall be computed based on aggregate information only. 

In this paper, we focus on the class of generalized aggregative equilibria (GAE) and resolve the two complications above. Our main technical contribution is to massage the implementation of the DR splitting method with an equivalent monotone inclusion defined on an extended space. In turn, we derive a \textit{single-layer}, \textit{fixed-step}, \textit{semi-decentralized} algorithm for the computation of a GAE in aggregative games. Thanks to the semi-decentralized structure, the computational complexity of the derived algorithm is only mildly dependent (virtually, independent) on the number of agents. Finally, we show via numerical simulations that our algorithm inherits the advantages of the DR splitting in terms of fast speed of convergence.

\subsection*{Basic notation}
$\R$ denotes the set of real numbers, and $\overline{\R} := \R \cup \{\infty\}$ the set of extended real numbers. $\bs{0}$ ($\bs{1}$) denotes a matrix/vector with all elements equal to $0$ ($1$); to improve clarity, we may add the dimension of these matrices/vectors as subscript. $A \otimes B$ denotes the Kronecker product between matrices $A$ and $B$; $\left\| A \right\|$ denotes the maximum singular value of $A$; $\rm{eig}(A)$ denotes the set of eigenvalues of $A$.
Given $N$ vectors $x_1, \ldots, x_N \in \R^n$, $\boldsymbol{x} := \col\left(x_1,\ldots,x_N\right) = \left[ x_1^\top, \ldots , x_N^\top \right]^\top$.

\subsection*{Operator theoretic definitions}
$\Id(\cdot)$ denotes the identity operator. The mapping $\iota_{S}:\R^n \rightarrow \{ 0, \, \infty \}$ denotes the indicator function for the set $\mc{S} \subseteq \R^n$, i.e., $\iota_{S}(x) = 0$ if $x \in S$, $\infty$ otherwise. For a closed set $S \subseteq \R^n$, the mapping $\proj_{S}:\R^n \rightarrow S$ denotes the projection onto $S$, i.e., $\proj_{S}(x) = \argmin_{y \in S} \left\| y - x\right\|$. The set-valued mapping $\nc_{S}: \R^n \rightrightarrows \R^n$ denotes the normal cone operator for the the set $S \subseteq \R^n$, i.e., 
$\nc_{S}(x) = \varnothing$ if $x \notin S$, $\left\{ v \in \R^n \mid \sup_{z \in S} \, v^\top (z-x) \leq 0  \right\}$ otherwise.
For a function $\psi: \R^n \rightarrow \overline{\R}$, $\dom(\psi) := \{x \in \R^n \mid \psi(x) < \infty\}$; $\partial \psi: \dom(\psi) \rightrightarrows {\R}^n$ denotes its subdifferential set-valued mapping, defined as $\partial \psi(x) := \{ v \in \R^n \mid \psi(z) \geq \psi(x) + v^\top (z-x)  \textup{ for all } z \in {\rm dom}(\psi) \}$;
A set-valued mapping $\mathcal{F} : \R^n \rightrightarrows \R^n$ is $\ell$-Lipschitz continuous, with $\ell>0$, if $\|u-v\| \leq \ell\|x-y\|$ for all $x,y \in \R^n$, $u \in \mathcal{F} (x)$, $v \in \mathcal{F} (y)$; $\mathcal{F} $ is (strictly) monotone if $(u-v)^\top  (x-y) \geq (>) \, 0$ for all $x \neq y \in \R^n$, $u \in \mathcal{F} (x)$, $v \in \mathcal{F} (y)$; $\mathcal{F} $ is $\eta$-strongly monotone, with $\eta>0$, if 
$(u-v)^\top (x-y) \geq \eta \left\| x-y \right\|^2$ for all $x \neq y \in \R^n$, $u \in \mathcal{F} (x)$, $v \in \mathcal{F} (y)$; $\mathcal{F} $ is $\eta$-{averaged}, with $\eta \in (0,1)$, if  
$\left\| \mathcal{F} (x) - \mathcal{F} (y) \right\|^2 \leq \left\| x-y \right\|^2 - \tfrac{1-\eta}{\eta}\left\| \left( \textup{Id}-\mathcal{F}  \right)(x) - \left(\textup{Id}-\mathcal{F}  \right)(y) \right\|^2$, for all $x, y \in \R^n$;
$\mathcal{F} $ is $\beta$-cocoercive, with $\beta>0$, if $\beta \mathcal{F} $ is $\tfrac{1}{2}$-averaged.
With ${\rm J}_{\mathcal{F} }:=(\Id + \mathcal{F} )^{-1}$, we denote the resolvent operator of $\mathcal{F} $, which is $\tfrac{1}{2}$-averaged if and only if $\mathcal{F} $ is monotone; $\fix\left( \mathcal{F}\right) := \left\{ x \in \R^n \mid x \in \mathcal{F}(x) \right\}$ and $\zer\left( \mathcal{F}\right) := \left\{ x \in \R^n \mid 0 \in \mathcal{A}(x) \right\}$ denote the set of fixed points and of zeros, respectively.

\section{Generalized aggregative games} \label{sec:GAG}

\subsection{Mathematical formulation}
We consider a set of $N$ noncooperative agents, where each agent $i \in \Ncal := \{1,\ldots, N \}$ shall choose its decision variable (i.e., strategy) $x_i$ from the local decision set $\Omega_i \subseteq \mathbb{R}^n$ with the aim of minimizing its local cost function $\left( x_i, \bs{x}_{-i} \right) \mapsto J_i\left( x_i, \bs{x}_{-i} \right): \R^n \times \R^{n(N-1)} \rightarrow \overline{\R}$, which depends on both the local variable $x_i$ (first argument) and on the decision variables of the other agents, $\bs{x}_{-i} = \col\left( \{ x_j \}_{j \neq i} \right)$ (second argument).

We focus on the class of average aggregative games, where the cost function of each agent depends on the local decision variable and on the value of the average strategy, i.e.,
\begin{equation*} 
\textstyle
\hat{x}:= \frac{1}{N}\sum_{i=1}^{N} x_i = M_n \bs x,
\end{equation*}
where $M_n := \frac{1}{N}\mathbf{1}^\top_N \otimes I_n$ and $\bs x = \col(x_1, \cdots , x_N)$.
Thus, for each $i \in \Ncal$, there exists a function $f_i: \R^n \times \R^n \rightarrow \overline{\R}$ such that cost function $J^i$ can be written as
\begin{align*} 
\textstyle
J_i(x_i, \bs{x}_{-i}) &=: \textstyle  f_i \left( x_i, M_n \bs x \right)
\end{align*}

Furthermore, we consider \textit{generalized games}, where the coupling among the agents arises not only via the cost functions, but also via their feasible decision sets. In our setup, the coupling constraints are described by an affine function, $\bs x \mapsto A \bs x - b$, where $A \in \R^{m \times nN}$ and $b \in \R^m$. Thus, the collective feasible set, $\bs{\mc X} \subseteq \R^{nN}$, reads as
\begin{equation}
\label{eq:G}
\bs{\mc X} := \prod_{i=1}^N \Omega_i  \bigcap \left\{ \bs x \in \R^{Nn} | \, A \bs x - b \leq \bs{0}_m \right\};
\end{equation}
while the feasible decision set of each agent $i \in \mc N$ is characterized by the set-valued mapping $\mc{X}_i$,
defined as
$$ \textstyle
\mc{X}_i(\bs{x}_{-i}) := \big\{ y_i \in \Omega_i \, | \, A_i y_i -b_i\leq  \sum_{j \neq i}^N b_j-A_j x_j \big\},
$$
where $\sum_{i=1}^N b_i = b$, $A_i \in \R^{m \times n}$ and $A = \left[ A_1, \ldots, A_N \right]$.
The set $\Omega_i$ represents the local decision set for agent $i$, while the matrix $A_i$ and $b_i$ are local data which characterize how agent $i$ is involved in the coupling constraints.
\begin{remark}[Affine coupling constraint]
Affine coupling constraints as considered in this paper are very common in the literature of noncooperative games, see \cite{paccagnan:16},
\cite{grammatico:17}, \cite{yi2017distributed}, \cite{liang2017distributed}.
Moreover, we recall that the more general case with separable convex coupling constraints can be reformulated as game via affine coupling constraints \cite[Remark 2]{grammatico:17tcns}.
{\hfill $\square$}
\end{remark}

\smallskip
Next, we postulate standard convexity and compactness assumptions for the constraint sets, and convexity of the cost functions with respect to their local decision variable.

\smallskip
\begin{standing}[Compact convex constraints] \label{ass:CS}
For  each $i \in \mc N$, the set $\Omega_i$ is nonempty, compact and convex. The set $\bs{\mc X}$ satisfies the Slater's constraint qualification.
{\hfill $\square$}
\end{standing}
\smallskip

\begin{standing}[Convex functions] \label{ass:CCF}
For all $i\in \Ncal$, and for all fixed $\boldsymbol{z} \in \prod_{j\neq i} \Omega_j$ and $w \in \frac{1}{N} \sum_{j\neq i} \Omega_j$
the functions $J_i\left( \, \cdot \, , \, \boldsymbol{z} \right)$ and $f_i\left( \, \cdot \, , \, w\right)$ are convex.
{\hfill $\square$}
\end{standing} 

\smallskip

In summary, the aim of each agent $i$, given the decision variables of the other agents, is to choose a strategy $x_i$ that solves its local optimization problem, according to the game setup previously described, i.e., 
\begin{align}\label{eq:Game}
\underset{x_i \in \, \R^n}{\min} \; 
J_i \big( x_i, \bs{x}_{-i} \big) \quad \text{s.t. } x_i \in \mc X_{i}(\bs x_{-i}).
\end{align}

\subsection{Nash vs aggregative equilibria}
From a game-theoretic perspective, we consider the problem to compute a Nash equilibrium, as formalized next.

\smallskip
\begin{definition}[Generalized $\varepsilon$-Nash equilibrium]
A collective strategy $\bs x^{*}$ is a generalized $\varepsilon$-Nash equilibrium ($\varepsilon$-GNE) of the aggregative game in \eqref{eq:Game} if $\bs x^* \in \bs{\mc X}$ and, for all $i\in \mc N$ and for all $z \in \mc X_i(\bs x^*_{-i})$
\begin{align} \textstyle \label{eq:eps-GNE}
f_i\left( x^*_{i}, M_n \bs x^* \right) \leq 
\ f_{i} \left( z, \, \frac{1}{N}z+ \frac{1}{N}\sum_{j\neq i}^N x_j^* \right) + \varepsilon.
\end{align}
If \eqref{eq:eps-GNE} holds with $\varepsilon=0$ then $\bs x^*$ is a GNE.{
\hfill $\square$}
\end{definition}
\smallskip

In other words, a set of strategies is a genralized Nash equilibrium if no agent can improve its objective function by unilaterally changing its strategy to another feasible one.

\smallskip
The concept of aggregative equilibrium springs from the intuition that the contribution of each agent to the aggregation decreases as the population size grows. Technically, at the limit for $N \rightarrow \infty$, the decision variable of agent $i$ does not influence the second argument of its cost function $f_i$.

\smallskip
\begin{definition}[Generalized aggregative equilibrium]
A collective strategy $\bar{ \bs x}$ is a generalized aggregative equilibrium (GAE) of the aggregative game in \eqref{eq:Game} if $\bar{\bs x} \in \bs{\mc X}$ and, for all $i\in \mc N$ and for all $z \in \mc X_i(\bar{\bs x}_{-i})$
\begin{align*} \textstyle 
f_i\left( \bar{x}_{i}, M_n \bar{\bs x} \right) \leq 
\ f_{i} \left( z, \, M_n \bar{\bs x} \right).
\end{align*}
\vspace*{-3em}

{\hfill $\square$}

\vspace*{.5em}
\end{definition}
Nash and aggregative equilibria are strictly connected. Indeed, under some mild assumptions it can be proven that every GAE equilibrium is an $\varepsilon$-GNE equilibrium, with $\epsilon$ tending to zero as $N$ grows \cite[\S 4]{paccagnan2018nash}.

Under Assumptions \ref{ass:CS}$-$\ref{ass:CCF}, the existence of a GNE and a GAE of the game in \eqref{eq:Game} follows from Brouwer's fixed-point theorem \cite[Prop.~12.7]{palomar2010convex}, while uniqueness does not hold in general.

\subsection{Variational equilibria and pseudo-subdifferentials} \label{subsec:VA}
Within all the possible equilibria, we focus on an important subclass, with some relevant structural properties, such as ``larger social stability'' and ``economic fairness" \cite[Th. 4.8]{facchinei2010generalized}, that corresponds to the solution set of an appropriate generalized variational inequality\footnote{\textit{Definition (Generalized variational inequality):}
Consider a closed convex set $ W \subseteq \R^n$, 
a set-valued mapping $\Psi: W \rightrightarrows \R^n$ and a single-valued mapping $\psi:  W \rightarrow \R^n$. 
The generalized variational inequality problem GVI$( W,\Psi)$, is the problem to find $x^* \in  W$ and $g^* \in \Psi(x^*)$ such that
$ \textstyle
(x-x^*)^\top \, g^* \geq 0 \textup{ for all } x\in  W
$. If $\Psi (x) = \{\psi(x)\}$ for all $x\in  W$, then GVI$(\mc W,\Psi)$ reduces to the variational inequality problem VI$( W,\psi)$.
{\hfill $\square$}
}%
 (GVI).

A fundamental mapping in noncooperative games is the so-called \textit{pseudo-subdifferential}, $ F: \bs{\mc X} \rightrightarrows \R^{nN}$, defined as
\begin{align} 
\label{eq:PsGr}
 F(\bs x) &:= 
\col\left( \left\{  \partial_{x_i} \, f_i \left( x_i, \, M_n \bs x \right) \right\}_{i \in \mathcal{N}} \right). 
\end{align}
Namely, the mapping $ F$ is obtained by stacking together the subdifferentials of the agents' objective functions with respect to their local decision variables.

Under Assumptions \ref{ass:CS}$-$\ref{ass:CCF}, it follows by \cite[Prop. 12.4]{palomar2010convex} that any solution to GVI$(\bs{\mc X}, F)$ is a (\textit{variational}) generalized Nash equilibrium (v-GNE) of the game in \eqref{eq:Game}. The inverse implication is not true in general, and actually in passing from the Nash equilibrium problem to the GVI, most solutions are lost \cite[\S 12.2.2]{palomar2010convex} -- indeed, a game may have a Nash equilibrium while the corresponding GVI has no solution. 
Note that, if the cost functions $J_i$'s are differentiable, then GVI$(\bs{\mc X}, F)$ reduces to VI$(\bs{\mc X}, F)$, which can be solved via projected-pseudogradient algorithms \cite{koshal:nedic:shanbhag:16,paccagnan:16,belgioioso2017convexity}, \cite{facchinei:pang}.

Similarly, given the mapping $F_{\text a}: \bs{\mc X} \rightrightarrows \R^{nN}$, defined as
\begin{align} 
\label{eq:PsGrAgg}
 F_\text{a}(\bs x) &:= 
\col\left( \left\{  \partial_{x_i} \, f_i \left( x_i, \, z  \right)|_{z = M_n \bs x} \right\}_{i \in \mathcal{N}} \right),
\end{align}
one can prove that every solution to GVI$(\bs{\mc X}, F_\text{a})$ is a (\textit{variational}) generalized aggregative equilibrium (v-GAE) of the game in \eqref{eq:Game}.
The remainder of the paper is devoted to design a semi-decentralized algorithm to compute a v-GAE.

\section{Generalized aggregative equilibrium as zero of the sum of two monotone operators} \label{sec:GNE-MO}

In this section, we exploit operator theory to recast the GAE seeking problem into a monotone inclusion, namely, the problem of finding a zero of a set-valued monotone operator. 

To make the resulting monotone inclusion suitable for the application of the DR splitting, we extend the original game in \eqref{eq:Game}, by including an additional player. Specifically, let us introduce the \textit{extended game} characterized by the following $N+1$ coupled optimization problems: 
\begin{align} \label{eq:EG_1}
(\forall i \in \mathcal{N})\; & 
\begin{cases}
\underset{
x_i, \, y_i 
}{\argmin} & f_i \left(x_i,\sigma \right) + \iota_{C_i}(x_i,y_i) 
\\
\hspace*{2em} \text{s.t.} 
&  M_m \bs y  \leq \0_m \\
& \sigma - M_n \bx = \0_n
\end{cases} \qquad\\
\label{eq:EG_2}
&
\begin{cases}
\underset{\sigma \in \R^n}{\argmin}& f_{\text c}(\bs x, \sigma)\\
\hspace*{2em} \text{s.t.} & \sigma - M_n \bx = \0_n
\end{cases}
\end{align}
where  $C_i := \{ (x_i,y_i) \in \Omega_i \times \R^m \, |\; y_i = A_i x_i -b_i \}$ is a local constraint set and $f_{\text c}(\bs x, \sigma) :=0$. 

\smallskip
The next statement shows that the v-GNE of the extended game in \eqref{eq:EG_1}-\eqref{eq:EG_2} fully characterize the v-GAE of the original game in \eqref{eq:Game}.

\begin{proposition} \label{prop:games}
The collective strategy $\bar{\bx}$ is a v-GAE of the game in \eqref{eq:Game} if and only if $\col(\bar{\bx},\bar{\by},\bar{\sigma})$ is a v-GNE of the game in \eqref{eq:EG_1}--\eqref{eq:EG_2}, with $\bar{\sigma} = M_n\bar{\bs x}$ and $\bar{y}_i = A_i \bar{x}_i-b_i$ for all $i \in \mathcal{N}$.
{\hfill $\square$}
\end{proposition}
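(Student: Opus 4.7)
The plan is to prove the equivalence by comparing the Karush--Kuhn--Tucker (KKT) characterizations of both equilibrium concepts and showing that the auxiliary variables $\bs{y}$ and $\sigma$ can be eliminated from the extended game's system to recover the v-GAE system verbatim, up to a harmless rescaling of the multipliers.

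First, I would recall that a v-GAE solves GVI$(\bs{\mc X}, F_\text{a})$. Under Standing Assumption~\ref{ass:CS} (Slater's qualification), this is equivalent to the KKT system
\begin{equation*}
\0 \in F_\text{a}(\bar{\bs x}) + \nc_{\bs\Omega}(\bar{\bs x}) + A^\top \bar\lambda_\text{a}, \quad \bar\lambda_\text{a}^\top (A\bar{\bs x}-b)=0, \;\bar\lambda_\text{a}\geq\0,
\end{equation*}
with $\bs\Omega := \prod_{i} \Omega_i$ and $\bar{\bs x}\in\bs{\mc X}$. Componentwise, this reads $\0 \in \partial_{x_i}f_i(\bar{x}_i, M_n\bar{\bs x}) + \nc_{\Omega_i}(\bar{x}_i) + A_i^\top \bar\lambda_\text{a}$ for every $i \in \mc N$.

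Second, I would derive the KKT system for the v-GNE of the extended game in \eqref{eq:EG_1}--\eqref{eq:EG_2}. In a variational equilibrium all players share the same dual variables for the coupling constraints; call them $\bar\lambda \in \R^m_{\geq 0}$ for $M_m\bs{y}\leq\0$ and $\bar\mu\in\R^n$ for $\sigma - M_n\bs{x}=\0$. The dummy player's stationarity in $\sigma$ forces $\bar\mu = \0$, since $f_\text{c}\equiv 0$. For each regular player $i$, the local constraint set $C_i$ is the intersection of the box $\Omega_i\times\R^m$ with the graph of the affine map $x_i\mapsto A_i x_i - b_i$, hence its normal cone decomposes as $\nc_{C_i}(x_i,y_i) = \{(u - A_i^\top\nu_i,\,\nu_i) : u\in\nc_{\Omega_i}(x_i),\,\nu_i\in\R^m\}$. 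Stationarity in $y_i$ then pins down $\nu_i = -\tfrac{1}{N}\bar\lambda$, and substituting this into the stationarity in $x_i$, together with the primal feasibility $\bar\sigma = M_n\bar{\bs x}$ and $\bar y_i = A_i\bar{x}_i - b_i$, yields exactly the v-GAE first-order condition above upon setting $\bar\lambda_\text{a} := \tfrac{1}{N}\bar\lambda$. Complementary slackness $\bar\lambda^\top M_m\bar{\bs y}=0$ in the extended game translates to $\bar\lambda_\text{a}^\top(A\bar{\bs x}-b)=0$, since $M_m\bar{\bs y} = \tfrac{1}{N}(A\bar{\bs x}-b)$ on the feasible set.

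The converse is by construction: given a v-GAE $\bar{\bs x}$ with multiplier $\bar\lambda_\text{a}$, set $\bar\sigma := M_n\bar{\bs x}$, $\bar y_i := A_i\bar x_i - b_i$, $\bar\mu := \0$, $\bar\lambda := N\bar\lambda_\text{a}$, and $\nu_i := -\bar\lambda_\text{a}$; the extended game's primal feasibility and KKT stationarity at $(\bar{\bs x},\bar{\bs y},\bar\sigma)$ are then straightforward to verify. The main obstacle I anticipate is the bookkeeping of the normal cone $\nc_{C_i}$ together with the three groups of multipliers ($\bar\lambda$, $\bar\mu$, and the local $\nu_i$), making sure that the $\bs{y}$- and $\sigma$-stationarity conditions cleanly decouple the auxiliary variables and reduce the extended KKT system to the v-GAE system modulo the $1/N$ rescaling.
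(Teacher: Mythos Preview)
Your proposal is correct and follows essentially the same route as the paper: both arguments compare the KKT systems of GVI$(\bs{\mc X},F_\text{a})$ and of the extended-game GVI, show that the $\sigma$-stationarity of the dummy player forces $\bar\mu=\0$, eliminate the auxiliary $y_i$'s via the structure of $C_i$, and match the resulting conditions up to a multiplier rescaling. The only cosmetic differences are that the paper invokes \cite[Th.~3.1]{auslender:teboulle:00} for the GVI--KKT equivalence and handles $\iota_{C_i}$ by writing $\partial_{z_i}[\,\cdot\,]$ on the stacked variable $z_i=\col(x_i,y_i)$, whereas you compute $\nc_{C_i}$ explicitly; and the paper absorbs the $1/N$ factor by equivalently rewriting the coupling as $NM_m\bs y\le\0$, so its $\lambda^*$ is already your $\bar\lambda_\text{a}$.
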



\smallskip
\begin{remark} 
According to a semi-decentralized communication structure, the additional player in \eqref{eq:EG_2} represents the central coordinator, which does not participate in the game, i.e., $f_{\text c}(\bs x, \sigma) \equiv 0$, and whose ``decision variable" $\sigma$ must be equal to the average strategy, i.e., $\sigma = M_n \bs x$.
{\hfill $\square$}
\end{remark}

\smallskip
\begin{remark} 
The additional local variables $y_i$'s and constraint sets $ C_i$'s transform the original affine coupling constraints, i.e., $A \bx - b \leq 0$, into: (1) $N$ local constraints, i.e., $(x_i,y_i)\in C_i$, and (2) one coupling constraint in aggregative form, $ M_m \bs y  \leq 0$, which is more convenient to handle in a semi-decentralized communication structure.
{\hfill $\square$}
\end{remark}

\smallskip
Next, we show that the v-GAE of the original game in \eqref{eq:Game} are zeros of a set-valued operator obtained by grouping the KKT conditions of the extended game in \eqref{eq:EG_1}--\eqref{eq:EG_2}.  

With this aim, let us introduce the mapping $\mc T: \R^d\rightrightarrows \R^d$, with $d=nN+mN+2n+m$, defined as
\begin{align} \label{eq:T}
\mathcal{T}&: \begin{bmatrix}
\bx \\ \by \\ \sigma \\ \mu \\ \lambda
\end{bmatrix} \mapsto
\begin{bmatrix}
F_{\text e}(\bx,\sigma) + \partial_{\bx}\iota_{\boldsymbol{{C}}}(\bx,\by) -M_n^\top \mu \\
\partial_{\by}\iota_{\bs{{C}}}(\bx,\by) + M_m^\top \lambda\\
\partial_\sigma f_{\text c}(\bs x, \sigma) + \mu \\
-(\sigma - M_n \bx)
\\
\nc_{\R^m_{\geq 0 }}(\lambda) - M_m \by 
\end{bmatrix} ,
\end{align}
where $\bs{{C}}:=\{ (\bs x,\bs y)\in \R^{(n+m)N} \, |\, (x_i,y_i)\in C_i,\,  \forall i \in \mathcal{N} \}$, $\partial_\sigma f_{\text c}= \0$ and $F_{\text e}: \R^{nN+n} \rightrightarrows \R^{Nn}$ is defined as
\begin{equation*}
F_{\text e}(\bx,\sigma) = \col 
\left(
\left\{
\partial_{x_i} f_i \left(
x_i,\sigma
\right)
\right\}_{i=1}^N
\right).
\end{equation*}

\begin{proposition} \label{prop:opT-vGNE}
The following statements are equivalent:
\begin{enumerate}[(i)]
\item $\bx^*$ is a v-GNE	of the extended game in \eqref{eq:EG_1}--\eqref{eq:EG_2};
\item $\exists \lambda^* \in \R^{m}_{\geq 0 }$ such that $\col(\bx^*,\by^*,\sigma^*,\mu^*,\lambda^*) \in \zer \mathcal{T}$, with $\sigma^* = M \bx^*$, $\mu^* = 0$ and $y_i^* = A_i x_i^*$,  $\forall i \in \mathcal{N}$.
{\hfill $\square$}
\end{enumerate}
\end{proposition}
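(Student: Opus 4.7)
The plan is to prove Proposition~2 by writing down the stacked Karush--Kuhn--Tucker (KKT) system of the extended game in \eqref{eq:EG_1}--\eqref{eq:EG_2} in its \emph{variational} form (i.e., with a single pair of multipliers $(\mu,\lambda)$ shared across all agents), and then recognize, block by block, that this system coincides with the inclusion $0 \in \mathcal{T}(\col(\bs x,\bs y,\sigma,\mu,\lambda))$. The hypotheses in Standing Assumptions \ref{ass:CS}--\ref{ass:CCF}, together with Slater's constraint qualification, ensure that each of the $N+1$ convex local problems in \eqref{eq:EG_1}--\eqref{eq:EG_2} is completely characterized by its KKT conditions, so the argument reduces to bookkeeping.

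Concretely, for each agent $i\in\mathcal{N}$, I would write the stationarity conditions of the local problem in $(x_i,y_i)$. Since $-\partial_{x_i}[\mu^\top(\sigma-M_n\bs x)]=\tfrac{1}{N}\mu$ and $\partial_{y_i}[\lambda^\top M_m\bs y]=\tfrac{1}{N}\lambda$, the stationarity in $x_i$ yields the $i$-th block of $F_{\text e}(\bs x,\sigma)+\partial_{\bs x}\iota_{\bs C}(\bs x,\bs y)-M_n^\top\mu$, while the stationarity in $y_i$ yields the $i$-th block of $\partial_{\bs y}\iota_{\bs C}(\bs x,\bs y)+M_m^\top\lambda$; these are the first and second components of $\mathcal{T}$. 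For the auxiliary player in \eqref{eq:EG_2}, stationarity in $\sigma$ gives $0\in\partial_\sigma f_{\text c}+\mu$, which matches the third component and, since $f_{\text c}\equiv 0$, immediately forces $\mu^*=0$. Primal feasibility $\sigma-M_n\bs x=0$ is exactly the fourth component (up to sign), and it delivers $\sigma^*=M_n\bs x^*$; membership in $\bs C$ delivers $y_i^*=A_ix_i^*-b_i$ for every $i$. Finally, primal feasibility $M_m\bs y\leq 0$, dual feasibility $\lambda\geq 0$, and complementarity $\lambda^\top M_m\bs y=0$ collapse into the single normal-cone inclusion $0\in \nc_{\R^m_{\geq 0}}(\lambda)-M_m\bs y$, which is the fifth component. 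The converse direction is read off the same calculation: any zero of $\mathcal{T}$ supplies, via $(\mu^*,\lambda^*)$, multipliers that certify optimality of every local problem by sufficiency of KKT under convexity.

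The main technical obstacle I anticipate is twofold. First, I must justify that a constraint qualification survives the introduction of the equality coupling $\sigma-M_n\bs x=0$ and the local equalities in $C_i$: since $\sigma$ is a free variable and $M_n$ has full row rank, the equality constraints are non-degenerate and do not spoil the Slater condition on the inequality constraints inherited from Standing Assumption~\ref{ass:CS}, so KKT remains both necessary and sufficient. Second, I must justify that the v-GNE of the extended game admits common multipliers $(\mu,\lambda)$ across the agents rather than agent-specific ones $(\mu_i,\lambda_i)$; this is precisely the variational refinement of GNE and follows by expressing the v-GNE as the solutions of a single GVI on the extended feasible set, exactly as recalled in Section~\ref{subsec:VA}. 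Once these two points are in place, the equivalence (i)$\Leftrightarrow$(ii) and the stated identities for $\sigma^*$, $y_i^*$ and $\mu^*$ follow directly.
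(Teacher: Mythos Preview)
Your proposal is correct and follows essentially the same approach as the paper: the paper's own proof of Proposition~\ref{prop:opT-vGNE} is a one-line observation that the KKT system \eqref{eq:KKT1-a}--\eqref{eq:KKT1-b} of the extended game (already written out in the proof of Proposition~\ref{prop:games}) coincides, block by block, with the inclusion $\col(\bx^*,\by^*,\sigma^*,\mu^*,\lambda^*)\in\zer\mathcal{T}$. Your plan simply unpacks this identification in more detail and makes explicit the points (constraint qualification, common multipliers via the GVI characterization of v-GNE) that the paper handles by citing \cite[Th.~3.1]{auslender:teboulle:00}.
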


\smallskip

Now, we show that the set-valued operator $\mathcal{T}$ can be written as the sum of two mappings that are maximally monotone if the \textit{extended pseudo-subdifferential} $F_\text e \times \partial_\sigma f_{\text c}$ is such. Thus, let us introduce the mappings
\begin{align} \label{eq:A} 
\mathcal{A}&:=( F_{\text e} \times \0_{d_1}) +(\nc_{\bs C} \times \0_{d_2}),\\
\label{eq:B}
\mc B &:= (\0_{d_3} \times \nc_{\R^m_{\geq 0}})+S,
\end{align}
where $d_1=d-nN$, $d_2=d_1-mN$, $d_3=d-m$ and
\begin{align*} \textstyle
S:\begin{bmatrix}
\bx \\ \by \\ \sigma \\ \mu \\ \lambda
\end{bmatrix} \mapsto
\begin{bmatrix}
0 & 0 & 0 & -M_n^\top &  0 \\
0 & 0 & 0 &  0 & M_m^\top  \\
0 & 0 & 0 &  I & 0 \\
M_n & 0 & -I & 0 & 0 \\
0 & -M_m & 0 & 0 & 0
\end{bmatrix}
\begin{bmatrix} \textstyle
\bx \\ \by \\ \sigma \\ \mu \\ \lambda
\end{bmatrix}.
\end{align*}

\smallskip
%

\begin{standing}[Extended monotonicity]
\label{ass:E-MON}
The mapping $F_\text e \times \partial_\sigma f_{\text c}$ is maximally monotone.
{\hfill $\square$}
\end{standing}

\smallskip

\begin{lemma} \label{lemm:Splitting}
The mapping $\mathcal{T}$ in \eqref{eq:T} can be split as $\mathcal{T} = \mathcal{A}+\mathcal{B}$, with $\mathcal{A}$ and $\mathcal{B}$ as in \eqref{eq:A}-\eqref{eq:B}. If Assumption \ref{ass:E-MON} holds, then the mappings $\cA$ and $\cB$ are maximally monotone.
{\hfill $\square$}
\end{lemma}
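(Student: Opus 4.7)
My plan is first to verify the algebraic splitting $\mathcal{T}=\mathcal{A}+\mathcal{B}$ by direct computation, and then to establish maximal monotonicity of each summand under Assumption~\ref{ass:E-MON} by combining a few standard tools from \cite{bauschke2017convex}.

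For the splitting, I would evaluate $\mathcal{A}$ and $\mathcal{B}$ at $\col(\bx,\by,\sigma,\mu,\lambda)$ block by block and compare with $\mathcal{T}$. The key observation is that the normal cone $\mathcal{N}_{\bs C}(\bx,\by)$ supplies the $\bx$- and $\by$-components $\partial_{\bx}\iota_{\bs C}$ and $\partial_{\by}\iota_{\bs C}$ appearing in the first two rows of $\mathcal{T}$, while the skew linear operator $S$ produces the coupling terms $-M_n^\top\mu$, $M_m^\top\lambda$, $\mu$, $M_n\bx-\sigma$ and $-M_m\by$, and the zero-extended cone $\0_{d_3}\times\mathcal{N}_{\R^m_{\geq 0}}$ closes the last row. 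The $\sigma$-row follows from $\partial_\sigma f_{\text c}\equiv\{0\}$, so that $\mathcal{A}+\mathcal{B}=\mathcal{T}$.

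For the maximal monotonicity of $\mathcal{A}$, I would combine three ingredients: (i) extending a maximally monotone operator by the zero mapping to additional coordinates preserves maximal monotonicity (a direct consequence of Minty's characterization, since $\range(\Id+\tilde T)=\R^{k+\ell}$ whenever $\range(\Id+T)=\R^{k}$), so by Assumption~\ref{ass:E-MON} the summand $F_{\text e}\times\0_{d_1}$ is maximally monotone on $\R^d$; (ii) the product set $\bs C$ is nonempty, closed and convex under Assumption~\ref{ass:CS} together with the affine equalities defining the $C_i$'s, hence $\mathcal{N}_{\bs C}\times\0_{d_2}$ is maximally monotone; (iii) the sum is then maximally monotone by Rockafellar's sum rule \cite[\S 25]{bauschke2017convex}, since $F_{\text e}\times\0_{d_1}$ has full effective domain under the standing assumptions. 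For $\mathcal{B}$, the same template applies: one checks by inspection that $S+S^\top=0$, so $S$ is a single-valued, linear, skew-symmetric operator on $\R^d$ with full domain and hence maximally monotone, while $\0_{d_3}\times\mathcal{N}_{\R^m_{\geq 0}}$ is maximally monotone as the zero-extension of the normal cone of a closed convex cone; the sum rule then delivers maximal monotonicity of $\mathcal{B}$.

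The main technical delicacy, in my view, lies in the domain qualification for the sum rule applied to $\mathcal{A}$: one has to argue that the effective domain of the extended pseudo-subdifferential meets the relative interior of $\bs C$ in the appropriate way, which is ultimately guaranteed by the Slater condition of Assumption~\ref{ass:CS} together with the convexity of Assumption~\ref{ass:CCF}. Everything else is essentially bookkeeping.
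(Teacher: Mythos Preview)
Your proposal is correct and follows essentially the same route as the paper: decompose $\mathcal{A}$ into the zero-extended $F_{\text e}$ and the zero-extended normal cone $\nc_{\bs C}$, decompose $\mathcal{B}$ into the skew-symmetric linear map $S$ and the zero-extended cone $\nc_{\R^m_{\geq 0}}$, then apply the sum rule for maximally monotone operators with one summand of full domain. The only remark is that your final paragraph overcomplicates things: since $F_{\text e}\times\0_{d_1}$ (respectively $S$) has full domain $\R^d$, the sum rule \cite[Cor.~24.4(i)]{bauschke2017convex} applies directly and no Slater-type qualification on $\bs C$ is needed at this step.
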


\smallskip
\begin{remark}
Strong (strict) monotonicity of the pseudo-subdifferential $F$ in \eqref{eq:PsGr} is a usual assumption in the literature of noncooperative game theory, see \cite{yi2017distributed}, \cite{paccagnan:16}, \cite{koshal:nedic:shanbhag:16}, \cite{belgioioso:grammatico:17cdc}. Here, we postulate (non-strict) monotonicity of the extended pseudo-subdifferential $F_\text e \times \partial_\sigma f_{\text c}$. Indeed, Assumption \ref{ass:E-MON}, represents an extended monotonicity assumption for the augmented spaces, see \cite[A. 3]{salehisadaghiani2017distributed} for a similar assumption.

{\hfill $\square$} 
\end{remark}

\smallskip
\begin{remark} \label{rem:Existence}
Since $ F_{\text e}(\bs x, \sigma)|_{\sigma = M \bs x}=  F_{\text{a}}(\bs x)$, Assumption \ref{ass:E-MON} implies the monotonicity of the mapping $ F_{\text a} $ in \eqref{eq:PsGrAgg}. By \cite[Prop.~12.11]{palomar2010convex}, this is a sufficient condition for the existence of a v-GAE of the game in \eqref{eq:Game}.
{\hfill $\square$}
\end{remark}

\section{Semi-decentralized aggregative equilibrium seeking via Douglas-Rachford splitting} \label{sec.DRalg}

In this section, we derive a single layer, fixed step, semi-decentralized algorithm
to compute a v-GAE in aggregative games as in \eqref{eq:Game}. The algorithm is obtained by solving the monotone inclusion in Prop. \ref{prop:opT-vGNE}(ii) via DR splitting.

\subsection{Douglas--Rachford operator splitting}
In Section \ref{sec:GNE-MO} we show that the original GAE equilibrium problem is fully characterized by the monotone inclusion
\begin{equation} \label{eq:MonINc}
\bs \omega^* \in \zer(\mc A + \mc B),
\end{equation}
where $\mc A$ and $\mc B$ are maximally monotone mappings. 

To solve \eqref{eq:MonINc}, several operator splittings can be considered \cite[\S 26]{bauschke2017convex}. Here, we adopt the DR splitting \cite[Th. 26.11]{bauschke2017convex}, whose iterations for the mappings $\mc A$ and $\mc B$ are recalled next.

Let $(\lambda_n)_{n\in\mathbb{N}}$ be a sequence in $[0,2]$ such that $\sum_{n \in \mathbb{N}} \lambda_n (2-\lambda_n) = + \infty $. Set $\bs \omega^0 \in \R^d$, then
\begin{align} \label{eq:DRsplit}
(\forall k \in \mathbb{N}) \quad
\left\lfloor
\begin{array}{l}
\bomega^{k+1/2} = \Res_{ \cA }( \tilde{\bomega}^k), \\
\tilde{\bomega}^{k+1/2} = 2 \bomega^{k+1/2}- \tilde{\bomega}^k,\\
\bomega^{k+1} = \Res_{\cB}(\tilde{\bomega}^{k+1/2}), \\
\tilde{\bomega}^{k+1} = \tilde{\bomega}^k +\lambda_k (\bomega^{k+1} -\bomega^{k+1/2}).
\end{array} \right.
\end{align}

The DR method demands for the solution of some implicit equations to evaluate the resolvent operators $J_{ \mc A}$ and $J_{\mc B}$ (see Lemma \ref{lem:resolvents} in the Appendix). On the contrary, non-implicit forward-backward methods perform explicit evaluation of either $\mc A$ or $\mc B$, at the cost of conditional stability in the form of step size constraints. However, the advantage of unconstrained step sizes comes with the price that the implicit equations can be solved only approximately in practice.

In the next subsection, we discuss the semi-decentralized algorithm obtained by explicitly writing the iterations in \eqref{eq:DRsplit}.
The formal derivation of the algorithm is in the Appendix.

\subsection{Semi-decentralized algorithm}

Our DR algorithm works as follows: The central coordinator is responsible for the updates of the multipliers $\lambda$, $\mu$ and the strategy $\sigma$, and communicates with the agents via broadcasting. The agents update their local variables $x_i$'s, $y_i$'s and communicate with the central coordinator only in aggregative form.

The next table summarizes the proposed DR algorithm.
\begin{center}
\begin{minipage}[b!]{\columnwidth}
\hrule
\smallskip
\textsc{Algorithm 1}: Semi-decentralized Douglas--Rachford
\smallskip
\hrule
\medskip

\textbf{Initialization}:
\begin{enumerate}[1.]
\item Local: For all $i \in \mc N$, set $\gamma_i >0$, $x^0_i\in \Omega_i$, $y_i^0 = A_i x_i^0-b_i$. 

\smallskip
\item Central: Set $\sigma^0=M_n \bs x^0$, $\mu^0 = \0_n$, $\lambda^0 \in \R^m_{\geq 0}$, $\alpha>0$, $\delta_\text{c} \in (0, \frac{1}{\hat{\gamma}})$, $\beta_\text{c} \in (0, \frac{1}{\alpha + \hat{\gamma}/N})$, where $\hat{\gamma} := \frac{1}{N} \sum_{i=1}^N \gamma_i$.
\end{enumerate}

\medskip
\noindent
\textbf{Iterate until convergence}:

\smallskip
\begin{enumerate}[1.]
\item a) Local strategy update: for each agent $i \in \mc N$
\begin{align*}
\textstyle
x_i^{k+1} &= \textstyle \underset{z \in {\Omega}_i}{\argmin} \, 
f_i \left( z,\sigma^k \right)
+{(A_i^\top \lambda^k - \frac{1}{N} \mu^k)}^\top z
\\[-.5em]
& \hspace*{7em} + \tfrac{1}{2 \gamma_i} \norm{z-x_i^k}_{(I+A_i^\top A_i)}^2
, \\
y_i^{k+1} &= \textstyle A_i x_i^{k+1}-b_i.
\end{align*}

b) Communication: $(\hat x^{k+1}, \hat y^{k+1}) \rightarrow$ coordinator.

\medskip
\item a) Central multipliers and strategy updates: \noindent
\begin{align*}
\textstyle
\lambda^{k+1} &= \textstyle \, {\proj}_{\R^m_{\geq 0}} \,  \big(
\lambda^{k}  + \delta_\text{c} \big( 2\hat{y}^{k+1}- \hat{y}^k  \big)
\big),\\
\textstyle
\mu^{k+1} &= \textstyle  \mu^k - \beta_\text{c} \, (2\hat{x}^{k+1}- \hat{x}^k  - \sigma^{k} + \alpha \mu^k ), \\
\textstyle 
\sigma^{k+1} &= \textstyle \sigma^{k}- \alpha \mu^{k+1}.
\end{align*}
\smallskip
b) Broadcast: $(\lambda^{k+1}, \mu^{k+1},\sigma^{k+1}) \rightarrow$ agents.
\end{enumerate}
\hrule
\end{minipage}
\end{center}

%

\smallskip
\noindent
\textbf{Communications}: At each iteration $k+1$, a communication round between the agents and the central coordinator takes place. Specifically, after a local update, the agents forward their updated strategies, $x_i^{k+1}$'s, $y_i^{k+1}$'s, to the central coordinator, which receives this information in aggregative form, i.e., $\hat x^{k+1} =\frac{1}{N}\sum_{j=1}^N x_j^{k+1} $, $\hat y^{k+1}=\frac{1}{N}\sum_{j=1}^N y_j^{k+1}$. In turn, the coordinator broadcasts the updated multipliers, $\lambda^{k+1}$, $\mu^{k+1}$ and its strategy $\sigma^{k+1}$, to the agents. 

\smallskip
\noindent
\textbf{Strategies update}:
At iteration $k+1$, for all $i \in \mc N$, the agent $i$ updates its local strategies as follows:
\begin{align}
\textstyle \nonumber
x_i^{k+1} &= \textstyle \underset{z \in \bs{\Omega}_i}{\argmin} \,  f_i \left( z,\sigma^k \right)
+\langle A_i^\top \lambda^k - \frac{1}{N}\mu^k, \, z \rangle
\\[-.5em] \label{eq:localUp}
& \hspace*{6.5em} + \tfrac{1}{2 \gamma_i} \norm{z-x_i^k}_{(I+A_i^\top A_i)}^2,\\
y_i^{k+1} &= \textstyle A_i x_i^{k+1}-b_i. \nonumber
\end{align}
The term ${(A_i^\top \lambda^k - \frac{1}{N} \mu^k)}^\top z$ is a penalization to satisfy the coupling constraints $A \bs x -b \leq \0_m$ and $\sigma - M \bs x = \0_n$, while $\tfrac{1}{2 \gamma_i} \norm{z-x_i^k}_{(I+A_i^\top A_i)}^2$ is a weighted proximal term.

\smallskip
After the communication step that follows the local actions update \eqref{eq:localUp}, the central coordinator updates the multipliers and its strategy as follows:
\begin{align}
\textstyle \label{eq:centrLam}
\lambda^{k+1} &= \textstyle \, {\proj}_{\R^m_{\geq 0}} \,  \big(
\lambda^{k}  + \delta_\text{c} \big( 2\hat{y}^{k+1}- \hat{y}^k \big)
\big),\\
\textstyle \label{eq:centrMu}
\mu^{k+1} &= \textstyle  \mu^k - \beta_\text{c} \, (2\hat{x}^{k+1}- \hat {x}^k  - \sigma^{k} + \alpha \mu_k ), \\
\textstyle \label{eq:centrSig}
\sigma^{k+1} &= \textstyle \sigma^{k}- \alpha \mu^{k+1}.
\end{align}
Remarkably, \eqref{eq:centrLam} coincides with the dual update of projected pseudo-gradient methods for generalized games \cite{yi2017distributed}, \cite{belgioioso:grammatico:18ecc}, \cite{paccagnan2018nash}. On the other hand, the updates \eqref{eq:centrMu}-\eqref{eq:centrSig} make sure that the variable $\sigma$ tracks the average state of the population, $\hat{x}$, and asymptotically converges to it.
We remark that all the central updates \eqref{eq:centrLam}--\eqref{eq:centrSig} require information in aggregative form, i.e., $\hat x^{k+1}$, $\hat y^{k+1}$, only.

\smallskip
In the next statement, we establish the global convergence of Algorithm 1 to a v-GAE of the game in \eqref{eq:Game}.

\smallskip
\begin{theorem}
\label{th.Convergence}
The sequence $\left( \col(\bs x^k, \bs y^k, \sigma^k, \mu^k, \lambda^k)    \right)_{k=0}^\infty$ generated by Algorithm 1
globally converges to some $ \col(\bs x^*, \bs y^*, \sigma^*, \mu^*, \lambda^*)$ $\in \zer( \mc T)$, with $\mc T$ as in \eqref{eq:T}, where $\bs x^*$ is a v-GAE of the game in \eqref{eq:Game}.
{\hfill $\square$}
\end{theorem}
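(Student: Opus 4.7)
The plan is to identify Algorithm 1 as a concrete implementation of the Douglas--Rachford recursion \eqref{eq:DRsplit} applied to the splitting $\mathcal{T} = \mathcal{A} + \mathcal{B}$ of Lemma \ref{lemm:Splitting}, and then to chain the equivalences provided by Propositions \ref{prop:opT-vGNE} and \ref{prop:games} to translate convergence in $\zer(\mathcal{T})$ into convergence to a v-GAE of the original game \eqref{eq:Game}.

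First, I would rewrite \eqref{eq:DRsplit} in a \emph{preconditioned} Hilbert space, i.e., replace $\Id$ by a symmetric positive-definite block operator $\Phi$ and use the resolvents $J_{\Phi^{-1}\mathcal{A}}$, $J_{\Phi^{-1}\mathcal{B}}$. The preconditioner I would pick has block $\tfrac{1}{\gamma_i}(I+A_i^\top A_i)$ acting on each $x_i$ and appropriate scalar blocks $\tfrac{1}{\delta_\text{c}}I$, $\tfrac{1}{\beta_\text{c}}I$, $\tfrac{1}{\alpha}I$ on $(\lambda,\mu,\sigma)$ (plus cross-terms that diagonalize $\mathcal{B}$'s skew part $S$). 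The step-size bounds in the initialization ($\delta_\text{c}\in(0,1/\hat\gamma)$, $\beta_\text{c}\in(0,1/(\alpha+\hat\gamma/N))$, $\alpha>0$) are exactly what is needed to make $\Phi\succ 0$; this is a standard Schur-complement check applied to the $S$-coupled blocks. Once $\Phi\succ 0$, maximal monotonicity of $\mathcal{A},\mathcal{B}$ (Lemma \ref{lemm:Splitting}) carries over to $\Phi^{-1}\mathcal{A},\Phi^{-1}\mathcal{B}$ in the $\Phi$-weighted inner product, so \cite[Th.~26.11]{bauschke2017convex} yields convergence of the DR iterates to a fixed point whose image under $J_{\Phi^{-1}\mathcal{B}}$ lies in $\zer(\mathcal{A}+\mathcal{B}) = \zer(\mathcal{T})$.

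Next, I would verify that unrolling \eqref{eq:DRsplit} with this $\Phi$ reproduces Algorithm 1 exactly. The resolvent of $\Phi^{-1}\mathcal{A}$ separates across agents because $\mathcal{A} = (F_\text e\times \0)+(N_{\bs C}\times \0)$ acts only on $(\bx,\by)$, and the weighted proximal inclusion $0\in F_\text e(x_i,\sigma)+N_{\Omega_i}(x_i)+\tfrac{1}{\gamma_i}(I+A_i^\top A_i)(x_i - x_i^k)+(\text{linear terms from }\tilde\bomega^k)$ together with the affine constraint $y_i=A_ix_i-b_i$ from $C_i$ yields precisely the local update for $x_i^{k+1}$ and $y_i^{k+1}$. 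The resolvent of $\Phi^{-1}\mathcal{B}$ couples $(\lambda,\mu,\sigma)$ through the skew operator $S$ and the normal cone $N_{\R^m_{\ge 0}}$; solving the resulting linear-plus-projection system in the order $\lambda\mapsto\mu\mapsto\sigma$ (using the block-triangular structure of $S$) gives the central updates \eqref{eq:centrLam}--\eqref{eq:centrSig}. The reflection step $\tilde\bomega^{k+1/2}=2\bomega^{k+1/2}-\tilde\bomega^k$ accounts for the $2\hat x^{k+1}-\hat x^k$ and $2\hat y^{k+1}-\hat y^k$ terms appearing in the central updates, and the relaxation with $\lambda_k\equiv 1$ matches the displayed algorithm.

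Finally, I would close the argument as follows: the DR theorem gives $\col(\bs x^k,\bs y^k,\sigma^k,\mu^k,\lambda^k)\to \col(\bs x^*,\bs y^*,\sigma^*,\mu^*,\lambda^*)\in\zer(\mathcal{T})$; Proposition \ref{prop:opT-vGNE} then implies that $\col(\bs x^*,\bs y^*,\sigma^*)$ is a v-GNE of the extended game \eqref{eq:EG_1}--\eqref{eq:EG_2} with $\sigma^* = M_n\bs x^*$ and $y_i^* = A_i x_i^*-b_i$; Proposition \ref{prop:games} then identifies $\bs x^*$ as a v-GAE of the original game \eqref{eq:Game}. Existence is guaranteed by Remark \ref{rem:Existence}. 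The main obstacle I anticipate is the bookkeeping for the preconditioner: one must simultaneously (i) keep $\Phi\succ 0$ so DR converges, (ii) reproduce the exact algebra of the algorithm (especially the mixed term $\tfrac{1}{2\gamma_i}\|z-x_i^k\|^2_{I+A_i^\top A_i}$ and the $2\hat{\,\cdot\,}^{k+1}-\hat{\,\cdot\,}^k$ reflections), and (iii) show the step-size bounds in the initialization are precisely the feasibility conditions for $\Phi\succ 0$. Once these three are aligned, the convergence claim follows by invoking \cite[Th.~26.11]{bauschke2017convex} in the $\Phi$-metric.
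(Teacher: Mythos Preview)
Your high-level strategy is exactly the paper's: realize Algorithm~1 as Douglas--Rachford on $\mathcal{A}+\mathcal{B}$ in a weighted metric, invoke \cite[Th.~26.11]{bauschke2017convex}, then chain Propositions~\ref{prop:opT-vGNE} and~\ref{prop:games}. However, your concrete realization of the preconditioner is off in two places, and following it as written would not reproduce Algorithm~1.

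First, the paper does \emph{not} use a preconditioner with $\tfrac{1}{\gamma_i}(I+A_i^\top A_i)$ blocks on $x_i$ or off-diagonal cross-terms. The metric is purely diagonal:
\[
\Gamma=\mathrm{blkdiag}(\bs\gamma\otimes I_n,\ \bs\gamma\otimes I_m,\ \alpha I_n,\ \beta I_n,\ \delta I_m),
\]
so $\Gamma\succ 0$ trivially whenever all scalars are positive; there is no Schur-complement argument. The weighted norm $\|z-x_i^k\|^2_{I+A_i^\top A_i}$ that you see in the local update does not come from $\Gamma$ at all: it emerges when you compute $J_{\Gamma\mathcal{A}}$ and enforce the affine constraint $y_i=A_ix_i-b_i$ built into $C_i$, because the two quadratic penalties $\tfrac{1}{2\gamma_i}\|z-\tilde x_i\|^2+\tfrac{1}{2\gamma_i}\|A_iz-b_i-\tilde y_i\|^2$ collapse to a single $(I+A_i^\top A_i)$-weighted proximal term once you substitute $\tilde x_i=x_i+\tfrac{\gamma_i}{N}\mu$ and $\tilde y_i=A_ix_i-b_i-\gamma_i\lambda$. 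If you also insert $(I+A_i^\top A_i)$ into the metric, you would double-count and obtain the wrong update.

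Second, your remark about ``cross-terms that diagonalize $\mathcal{B}$'s skew part $S$'' is a preconditioned-forward-backward idea and has no role in DR. The paper simply computes $J_{\Gamma\mathcal{B}}$ by writing out the inclusion $(\Id+\Gamma\mathcal{B})(\bomega^+)\ni\bomega$ and solving the resulting linear-plus-projection system by substitution; this yields the $\lambda,\mu,\sigma$ updates in closed form. The step-size constraints $\delta_\text{c}\in(0,\hat\gamma^{-1})$ and $\beta_\text{c}\in(0,(\alpha+\hat\gamma/N)^{-1})$ are not positivity conditions on any metric: they are the \emph{ranges} of the reparametrizations $\delta_\text{c}=\delta/(\delta\hat\gamma+1/N)$ and $\beta_\text{c}=\beta/(1+\beta(\alpha+\hat\gamma/N))$ as $\delta,\beta$ sweep $\R_{>0}$. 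Your reflection/relaxation remarks and the final chaining via Propositions~\ref{prop:opT-vGNE}--\ref{prop:games} are correct.
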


\section{Resource allocation game} \label{sec.IFBS}
\subsection{Illustrative problem setup}
For illustration purposes, we consider a simplified resource allocation game. Specifically, we suppose that each agent has to complete a given task in $n$ time slots. Let us denote by $x_i(h) \in [0, 1]$ the ratio of the task that agent $i$ allocates at time slot $h$, and by $\bar{u}_i\in [0, 1]^n$ a personalized vector of upper constraints on the maximum allowed allocation for each time slot. The set of possible allocation vectors for agent $i$ is therefore given by
\begin{equation} \label{eq:lc_ex} \textstyle
\Omega_i := \{ x_i \in \R^n | \; \mathbf{0} \leq x_i \leq \bar{u}_i, \;\mathbf{1}^\top_n x_i = 1 \}, 
\end{equation} 
with component-wise inequalities. Moreover, we suppose that there exists an upper constraint $\bar b(h) >0$, for the sum among the local allocations, at each time slot $h$, i.e., 
\begin{equation*} \textstyle
\sum_{i=1}^N w_i x_i(h) \leq \bar b(h), \quad \text{for }  h = 1,\cdots,n
\end{equation*}
where $w_i \geq 0$ weights the contribution of agent $i$ in the summation. The admissible overall strategy profile $\boldsymbol{x}$, must therefore lie within the set
\begin{equation} \label{eq:constrSim} \textstyle
\bs{\mc{X}} :=\{ \bs{x} \in \prod_{i=1}^N \Omega_i \, | \, A\bs{x} \leq b \},
\end{equation}
where $b:= \col \left(\{ \bar{b}(1), \cdots, \bar{b}(n) \right)$ and $A := w^\top \otimes I_n$, with $w := \col (w_1,\cdots, w_N)$.

The aim of each agent $i$ is to choose a strategy $x_i$ within \eqref{eq:lc_ex} and such that $(x_i,\bs{x}_{-i})$ satisfies \eqref{eq:constrSim}, while minimizing its local cost function $J_i$, defined as
\begin{align} \label{eq:CFsim} \textstyle
J_i(x_i, \bs{x}_{-i}) &:= \frac{1}{2} a_i \norm{x_i - \tilde{x}_i} ^2 + { \left( Q_i \, M \bs{x} \right)}^\top x_i,
\end{align}
where $a_i \in \R_{>0}$ and  $Q_i\succeq 0$.
The cost function in \eqref{eq:CFsim} represents the cost for deviating from a pre-fixed allocation schedule $\tilde{x}_i$ plus an additional price that is proportional to the average allocation, $M \bs{x}$. Essentially, each agent has an incentive to allocate its task in time slots that are not congested by the other agents.

\subsection{Numerical study}
We consider $N=10^3$ agents with $n=10$ time slots and we randomly set their parameters as $a_i \sim [1, 2]$, $w_i \sim [1, 2]$, $ Q_i = q_i I_n + \bar{Q}_i$, with $q_i \sim [1, 2] $, $(\bar{Q}_i)_{j,k} \sim [0, 0.1]$, for all $j,k \in \mc N$, all with uniform distribution. We randomly generate the local and the coupling bounds as:
$\bs{1}^\top \bar{u}_i = 2$ and $b_i=\frac{b}{N}$, for all $i \in \mc N$ , with $\tfrac{1}{2} A \bar{ \bs u} \leq b \leq \tfrac{2}{3} A\bar{ \bs u}$
to guarantee the feasibility and avoid redundancy in the constraints. The desired allocation vectors are set, for all $i \in \mc{N}$, as $\tilde{x}_i := \proj_{\mc{X}_i} (e_1)$, where $e_1 := [1,0, \cdots, 0]^\top \in \R^n$, namely, each agent wants to complete its task in the first time slot. 

In Fig. \ref{fig:1}, we confront the convergence properties on the described scenario of the proposed DR algorithm versus the \textit{preconditioned Forward-Backward} (pFB) algorithm in \cite[Alg. 1]{belgioioso:grammatico:18ecc}, modified to find a v-GAE.
Specifically, to evaluate the convergence speed of the algorithms, we consider the sequence $\norm{\bs x^{k}-\bar{ \bs x}}$, i.e., the distance of the estimated solution at iteration $k$, $\bs x^k$, from the v-GAE of the game, $\bar{\bs x}$.

Fig. \ref{fig:1} shows the mean value of the sequence $\norm{\bs x^{k}-\bar{ \bs x}}$ in 50 simulations,
for the DR and the pFB algorithms. We note that, on this scenario, the DR algorithm converges, in average, more than 10 times faster with respect to the projection-type algorithm.

\begin{figure}
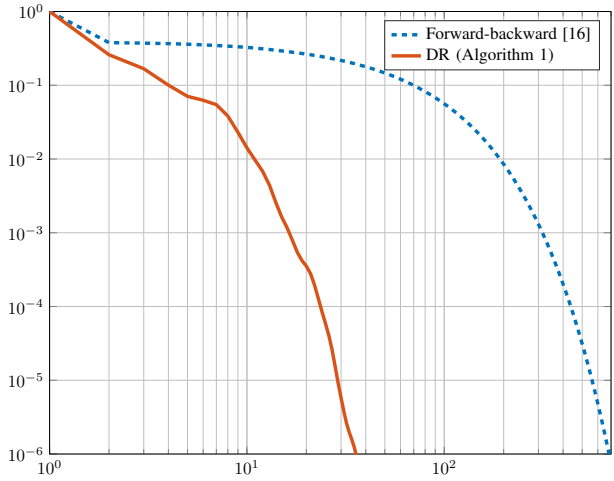

%
%
\definecolor{mycolor1}{rgb}{0.00000,0.44700,0.74100}%
\definecolor{mycolor2}{rgb}{0.85000,0.32500,0.09800}%
%
\caption{Mean value of the sequence $\norm{\bs x(k)-\bar{\bs x}}/\norm{x(0)-\bar{\bs x}}$ in 50 simulations, for the DR algorithm (red line) and the  preconditioned forward-backward (blue line) in \cite{belgioioso:grammatico:18ecc}. The parameters of the DR are set as follows: $\alpha = 1 $, $\delta_\text{c} = \beta_\text{c} = 0.5$ and $\gamma_i = 1$, $ \forall i \in \mathcal{N}$.
}
\label{fig:1}
\end{figure}

\section{Conclusion and outlook} \label{sec:Concl}
A particular massaged implementation of the Douglas--Rachford operator splitting is applicable to generalized aggregative games for the computation of an aggregate equilibrium via a single-layer, semi-decentralized algorithm. In our numerical experience, the derived Douglas--Rachford algorithm outperforms forward-backward, i.e., projected-pseudogradient, algorithms in terms of convergence speed.

The convergence guarantees for our DR algorithm are limited by Assumption \ref{ass:E-MON}, i.e., monotonicity on the extended space, which does not hold in general. However, our numerical experience suggests that global convergence holds even if the assumption does not hold. Thus, it would be interesting to weaken Assumption \ref{ass:E-MON} with a monotonicity-type assumption on the original game. It would be also valuable to adapt our DR algorithm to generalized Nash equilibrium seeking.


\section*{Appendix}
\subsubsection*{Proof of Proposition \ref{prop:games}}
Let us introduce the pseudo-subdifferential mapping of the extended game in \eqref{eq:EG_1}--\eqref{eq:EG_2}
\begin{align*} 
\textstyle
\mc U:
\begin{bmatrix}
\bs x\\[.2em]
\bs y\\[.2em]
\sigma
\end{bmatrix}
 \mapsto
\begin{bmatrix}
\col \left( \left\{ \partial_{x_i} \left[ f_i \left( x_i, \sigma \right) + \iota_{C_i}(x_i,y_i) \right] \right\}_{i=1}^N \right) \\[.2em]
\col \left( \left\{ \partial_{y_i} \left[ f_i \left( x_i, \sigma \right)+ \iota_{C_i}(x_i,y_i) \right] \right\}_{i=1}^N \right) \\
\partial_{\sigma} f_{\text c}(\bs x,\sigma)
\end{bmatrix}
\end{align*} 
and the set of coupling constraints $\bs{\mc V }$, defined as
\begin{equation*}
\bs{\mc V}:=
\left\{
\col(\bs x, \bs y, \sigma) \, | \; NM \bs y \leq0, \; \sigma -M \bs x = 0
\right\}.
\end{equation*}
We recall that $\col(\bx^*,\by^*,\sigma^*)$ is a v-GNE of the extended game in \eqref{eq:EG_1}--\eqref{eq:EG_2} if and only if it is a solution to GVI$(\bs{\mc V}, \mc U)$.
It follows from \cite[Th. 3.1]{auslender:teboulle:00}, that $\col(\bx^*,\by^*,\sigma^*)$ solves GVI$(\bs{\mc V},\mc U)$ if and only if there exist $\lambda^*$ and $\mu^*$ such that the following $(N+1)$ sets of KKT conditions are satisfied:
\begin{align} \label{eq:KKT1-a}
\forall i\; &\begin{cases} 
0 \in \partial_{x_i} (f_i \left( x_i^*, \sigma^* \right) +  \iota_{C_i}(x_i^*, y_i^*)) -  \frac{1}{N}\mu^* \\
0 \in \partial_{y_i} \iota_{C_i}(x_i^*, y_i^*) + \lambda^* \\
0 = \sigma^* - M_n \bx^*\\
0  \in \nc_{\R^m_{\geq 0}}(\lambda^*) - NM_m \bs y^*
\end{cases}   
\\
\label{eq:KKT1-b}
&\begin{cases}
0 = \partial_\sigma f_\text{c}(\bs x^*, \sigma^*)+ \mu^*\\
0 =  \sigma^* - M_n \bx^*
\end{cases}
\end{align}
Note that \eqref{eq:KKT1-a}--\eqref{eq:KKT1-b} are satisfied if and only if $\sigma^* = M_n \bx^*$, $\mu^* = 0$ (since $\partial_\sigma f_\text{c}(\bs x^*, \sigma^*)=0$) and for all $i \in \mathcal{N}$
\begin{align} \label{eq:KKT2}
\begin{cases}
0 \in \partial_{x_i} f_i \left( x_i^*, \sigma^* \right)|_{\sigma^* = M \bs x^*} + \iota_{\mathcal{C}_i}(x_i^*, y_i^*) \\
0 \in \partial_{y_i}  \iota_{\mathcal{C}_i}(x_i^*, y_i^*) + \lambda^* \\
0  \in \nc_{\R^m_{\geq 0}}(\lambda^*) -NM \by^*
\end{cases} 
\end{align}
If we define $z_i = \col(x_i,y_i)$, then the first two inclusions in \eqref{eq:KKT2} can be recast in compact form as
\begin{equation*}
0 \in \partial_{z_i} \left[ 
f_i(x_i^*,\sigma^*)|_{\sigma^*=M\bs x^*} + \iota_{C_i}(x_i^*,y_i^*) + \lambda^\top y_i^*
\right].
\end{equation*}
Note that the last inclusion is satisfied if and only if $(x_i^*,y_i^*) \in C_i$, i.e., $y_i^* = A_i x_i^*-b_i$ and $x_i^* \in \Omega_i$, and $0 \in \partial_{x_i} \left[
f_i(x_i^*,\sigma^*)|_{\sigma^* = M\bs x^*} + \iota_{\Omega_i}(x_i^*) + \lambda^{*\top} A_i x_i^*
\right]$. Hence, $\col(\bx^*,\by^*,\lambda^*)$ satisfies the KKT conditions in \eqref{eq:KKT2} if and only if, for all $i \in \mathcal{N}$, $y_i^* = A_i x_i^*-b_i$ and
\begin{align}
\label{eq:KKT3}
\begin{cases}
0 \in \partial_{x_i} f_i \left( x_i^*, \sigma^* \right)|_{\sigma^* = M\bs x^*} + \nc_{\Omega_i}(x_i^*) + A_i^\top \lambda^* \\
0  \in \nc_{\R^m_{\geq 0}}(\lambda^*) -(A \bx^* -b)
\end{cases}
\end{align}
By \cite[Th. 3.1]{auslender:teboulle:00}, the pair $(x_i^*, \lambda^*)$ satisfy \eqref{eq:KKT3} for all $i \in \mathcal{N}$ if and only if $\bx^*$ solves GVI$(F_{\text a}, \bs{\mc X})$, with $F_{\text a}$ as in \eqref{eq:PsGrAgg} and $\bs{\mc X}$ as in \eqref{eq:G}, namely, $\bx^*$ is a v-GAE of the game in \eqref{eq:Game}.
{\hfill $\blacksquare$}

\subsubsection*{Proof of Proposition \ref{prop:opT-vGNE}}
The statement follows by noticing that $(x_i^*,y_i^*,\sigma^*,\mu^*,\lambda^*)$ satisfy the KKT in \eqref{eq:KKT1-a}--\eqref{eq:KKT1-b}, for all $i \in \mathcal{N}$, if and only if  $\col(\bx^*,\by^*,\sigma^*,\mu^*,\lambda^*) \in \zer \mathcal{T}$ .
{\hfill $\blacksquare$}

\subsubsection*{Proof of Lemma \ref{lemm:Splitting}}
The mapping $\cA$ is the sum of 2 terms: (1) $\mc A_1=\nc_{\bs{C}} \times \0_{d_2}$, which is maximally monotone since is the direct sum of maximally monotone operators \cite[Prop. 20.23]{bauschke2017convex}, namely, $\nc_{\bs{C}}$, normal cone of a closed convex set, thus maximally monotone, and $\0_{d_2}$, obviously maximally monotone; (2) $\mc \A_2={F}_{\text e} \times \0_{d_1}$ which is maximally monotone by Assumption \ref{ass:E-MON}. The maximal monotonicity of $\mc A_1 + \mc A_2 = \mc A$ follows from \cite[Cor. 24.4(i)]{bauschke2017convex}, since $\dom F_{\text e}\times \0_{d_1}  = \R^{d}$.
%
The mapping $\mathcal{B}$ is the sum of 2 terms: (1) $\mathcal{B}_1= \0_{d_3} \times \R^m_{\geq 0}$ which is maximally monotone for the same reasons of $\mc A_1$ and  (2) $S$ which is a linear, skew symmetric operator, thus maximally monotone \cite[Ex. 20.30]{bauschke2017convex}. Then, the maximal monotonicity of $\mc B_1 + \mc B_2 = \mc B$ follows from \cite[Cor. 24.4(i)]{bauschke2017convex} since $\dom \mathcal{B}_2 = \R^{d}$.
{\hfill $\blacksquare$}

\smallskip
\subsection*{Proof of Theorem \ref{th.Convergence}: }
The proof is divided in two parts: 
\begin{enumerate}[(1)]
\item We show that Algorithm 1 corresponds to the Douglas-Rachford splitting applied on the mappings $\Gamma \mc A$ and $\Gamma \mc B$, where $\Gamma$ is a diagonal positive definite matrix that characterizes the step sizes of the algorithm.
\item We show that the mappings $\Gamma \mc A$ and $\Gamma \mc B$ satisfy the assumptions of \cite[Th. 26.11]{bauschke2017convex}, which establishes global convergence to $\zer (\Gamma\mc A + \Gamma\mc B) = \zer(\mc A  +\mc B) = \zer(\mc T)$.
\end{enumerate}

\smallskip
\noindent
(1): The goal is to explicitly derive the iterations in \eqref{eq:DRsplit} for the mappings $\Gamma \mc A$ and $\Gamma \mc B$, where $\Gamma \succ 0 $ is defined as
\begin{align} \label{eq:GAMMA}
\Gamma := \textrm{blkdiag} (\bs \gamma \otimes I_n, \bs \gamma \otimes I_m, \alpha I_n,\beta I_n, \delta I_m),
\end{align} 
with $\bs \gamma = \diag(\gamma_1,\cdots,\gamma_N) $ and $ \alpha,\beta,\delta,\gamma_i \in \R_{>0}, \, \forall i \in \mc N$.

The next Lemma shows how to compute the implicit resolvents $\Res_{\Gamma \mathcal{A}}$ and $\Res_{\Gamma \mathcal{B}}$ in a semi-decentralized fashion.
\begin{lemma} \label{lem:resolvents}
For any positive definite matrix $\Gamma$ as in \eqref{eq:GAMMA}, $\Res_{\Gamma \mathcal{A}}={(\Id+\Gamma \mathcal{A})}^{-1}$ and $\Res_{\Gamma \mathcal{B}}={(\Id+\Gamma \mathcal{B})}^{-1}$ read as
\begin{enumerate}[(i)]
\item $\Res_{\Gamma \mathcal{A}}\left( \col( \bx, \by, \sigma, \mu, \lambda) \right) \mapsto \col( \bx^+, \by^+, \sigma ,\mu , \lambda),$ with 
\begin{equation} \label{eq:JgA}
\textstyle
(\forall i \in \mc N)  \;
\begin{cases}
x_i^+  = \underset{v \in \Omega_i}{\argmin} \;
f_i \left( z,\sigma \right)+ \frac{1}{2\gamma_i} \norm{v-x_i}^2\\[-.2em]
\hspace*{6.5em} +\frac{1}{2\gamma_i} \norm{A_i v - b_i - y_i}^2 \\[.3em]
y_i^+ = A_i x_i^+-b_i, 
\end{cases} 
\end{equation}
\item $\Res_{\Gamma \mathcal{B}} \left( \col( \bx, \by, \sigma, \mu, \lambda) \right) \mapsto \col(\bx^+,\by^+,\sigma^+,\mu^+,\lambda^+)$, define $P=\1^\top_N \otimes I_m$ and $\hat{\gamma} := \frac{1}{N} \sum_{j=1}^N \gamma_j$, then
\begin{align} \label{eq:JgB}
&\begin{cases}
\mu^+ = \frac{N}{(1+\beta \alpha )N + \beta \hat{\gamma}} \, (\mu + \beta(\sigma - M \bx) ) \\
\lambda^+ = {\proj}_{\R^m_{\geq 0}} \frac{1}{1+\delta N \hat{\gamma}  } \left(
\lambda  +\delta P \by 
\right)\\
\bx^+ = \bx + \bgamma M^\top \mu^+ \\
\by^+ = \by - \bgamma P^\top \lambda^+ \\
\sigma^+ = \sigma - \alpha \mu^+,
\end{cases}
\end{align}
{\hfill $\square$}
\end{enumerate}
\end{lemma}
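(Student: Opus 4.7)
My plan is to derive each resolvent formula by writing out the defining inclusion $\bs\omega\in(\Id+\Gamma\mathcal{F})(\bs\omega^+)$ blockwise for $\mathcal{F}\in\{\mathcal{A},\mathcal{B}\}$ and solving for $\bs\omega^+$ in closed form, exploiting the block-diagonal structure of $\Gamma$ together with the block decompositions of $\mathcal{A}$ and $\mathcal{B}$ in \eqref{eq:A}--\eqref{eq:B}.

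For part (i), I would first observe that $\mathcal{A}$ has trivial components in the $\sigma$, $\mu$ and $\lambda$ blocks, so the resolvent leaves these coordinates invariant; thus $\sigma^+=\sigma$, $\mu^+=\mu$, $\lambda^+=\lambda$ do not even appear in \eqref{eq:JgA}. The remaining inclusion on $(\bx,\by)$ decouples across agents $i\in\mathcal{N}$ because $\bs{C}=\prod_i C_i$. For each agent, I would bundle the two partial inclusions into a single inclusion into the joint normal cone $\nc_{C_i}(x_i^+,y_i^+)$. Since $C_i$ is the graph of the affine map $v\mapsto A_i v - b_i$ restricted to $\Omega_i$, a direct computation yields
\[
\nc_{C_i}(x_i^+,y_i^+)=\{(\xi,\eta)\in\R^n\times\R^m:\xi+A_i^\top\eta\in\nc_{\Omega_i}(x_i^+)\}
\]
whenever $(x_i^+,y_i^+)\in C_i$. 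Identifying $\eta=(y_i-y_i^+)/\gamma_i$ from the $y_i$-equation, substituting $y_i^+=A_i x_i^+-b_i$, and collecting terms then recovers exactly the first-order optimality condition for the weighted proximal minimization displayed in \eqref{eq:JgA}.

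For part (ii), the linear skew operator $S$ lets me read off from the $\bx$, $\by$ and $\sigma$ blocks that
\[
\bx^+=\bx+(\bs\gamma\otimes I_n)M_n^\top\mu^+,\quad \by^+=\by-(\bs\gamma\otimes I_m)M_m^\top\lambda^+,\quad \sigma^+=\sigma-\alpha\mu^+,
\]
i.e., these three variables are affine in $\mu^+$ and $\lambda^+$. Substituting into the remaining $\mu$ and $\lambda$ equations decouples them, via the Kronecker identities $M_n(\bs\gamma\otimes I_n)M_n^\top=(\hat\gamma/N)I_n$ and $M_m(\bs\gamma\otimes I_m)M_m^\top=N\hat\gamma\,I_m$ (with $M_m=P$), which are immediate from $\1_N^\top\bs\gamma\1_N=N\hat\gamma$. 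The $\mu$-equation becomes a scalar-times-identity linear system that inverts directly to the first line of \eqref{eq:JgB}. The $\lambda$-equation reduces to the implicit projection relation
\[
\lambda^+=\proj_{\R^m_{\geq0}}\bigl(\lambda+\delta P\by-\delta N\hat\gamma\,\lambda^+\bigr).
\]

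The main obstacle is resolving this implicit projection. The plan is to exploit that $\proj_{\R^m_{\geq0}}$ acts componentwise and do a case analysis per coordinate $j$: when $\lambda_j^+>0$, the equation collapses to $\lambda_j^+(1+\delta N\hat\gamma)=\lambda_j+\delta(P\by)_j$; when $\lambda_j^+=0$, the argument of the projection must be nonpositive, i.e.\ $\lambda_j+\delta(P\by)_j\leq0$. Since $1+\delta N\hat\gamma>0$, the two branches merge into $\lambda^+=(1+\delta N\hat\gamma)^{-1}\proj_{\R^m_{\geq0}}(\lambda+\delta P\by)$, which is precisely the second line of \eqref{eq:JgB}. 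A secondary subtlety is interpreting the partial subdifferentials $\partial_{\bx}\iota_{\bs C}$ and $\partial_{\by}\iota_{\bs C}$ appearing in $\mathcal{T}$; I sidestep it by working throughout with the joint normal cone $\nc_{\bs C}$, which is the canonical aggregation of these partial pieces and coincides with them in the context of the resolvent inclusion.
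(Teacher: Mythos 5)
Your proposal is correct and takes essentially the same route as the paper: both proofs expand the defining inclusion $\0 \in \bomega^+ + \Gamma\mathcal{F}(\bomega^+) - \bomega$ blockwise, observe that $\Res_{\Gamma\mathcal{A}}$ fixes $(\sigma,\mu,\lambda)$ and decouples agent-wise over $\bs{C}=\prod_i C_i$, and in $\Res_{\Gamma\mathcal{B}}$ eliminate $\bx^+,\by^+,\sigma^+$ (affine in $\mu^+,\lambda^+$) using $M\bgamma M^\top = (\hat{\gamma}/N)I$ and $P\bgamma P^\top = N\hat{\gamma}\,I$, exactly as in the paper's substitution into \eqref{eq:Res-B-Sys}. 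Your two local variations --- the normal-cone sum rule for $\nc_{C_i}$ where the paper directly identifies the joint inclusion as the first-order condition of the proximal minimization, and the componentwise case analysis of the implicit projection $\lambda^+=\proj_{\R^m_{\geq 0}}(\lambda+\delta P\by-\delta N\hat{\gamma}\lambda^+)$ where the paper solves the equivalent strongly convex quadratic program \eqref{eq:lambdaQ} --- are equivalent computations yielding the same closed forms \eqref{eq:JgA}--\eqref{eq:JgB}.
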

\begin{proof}
(i) Let $\bomega = \col(\bx,\by,\sigma,\mu,\lambda)$, then 
\begin{align}
\Res_{\Gamma \mathcal{A}}(\bomega) &= (\Id+ \Gamma \mathcal{A})^{-1}(\bomega) = \bomega^+ \nonumber \\
&\Leftrightarrow \bomega \in (\Id + \Gamma \cA)(\bomega^+) \nonumber \\
&\Leftrightarrow  \mathbf{0} \in \bomega^+ + \Gamma \cA(\bomega^+) - \bomega
\label{eq:Res-Incl}
\end{align}
By expanding \eqref{eq:Res-Incl} we obtain a system of equations and inclusions, i.e., 
$\sigma^+ = \sigma$, $\mu^+=\mu$, $\lambda^+ = \lambda$ and for all $i \in \mathcal{N}$
\begin{equation} \label{eq:Res-A-Incl}
\begin{cases} \textstyle
0 \in x_i^+ - \gamma_i \partial_{x_i} (f_i \left( x_i^+,\sigma \right) +  \iota_{\cC_i}(x_i^+,y_i^+)) - x_i \\
0 \in y_i^+ + \gamma_i \partial_{y_i} \iota_{\cC_i}(x_i^+,y_i^+)-y_i.
\end{cases}
\end{equation}
If we define $z_i^+= \col(x_i^+,y_i^+)$, then \eqref{eq:Res-A-Incl} can be written as
\begin{equation*}
0 \in \partial_{z_i^+} \left[ 
\gamma_i f_i \left( x_i^+,\sigma \right) +\gamma_i \iota_{\cC_i}(x_i^+,y_i^+) + \tfrac{1}{2} \norm{z_i^+ - z_i}^2  
\right].
\end{equation*}
The latter inclusion is verified if and only if
\begin{align*}
& \textstyle
(x_i^+,y_i^+)  = \underset{
\begin{subarray}{c}
v \in \Omega_i\\
w = A_i v -b_i,
\end{subarray}
}{\argmin} \, \gamma_i f_i \left(v,\sigma \right) + \frac{1}{2} \norm{ \begin{bmatrix}
v-x_i \\ w-y_i
\end{bmatrix} }^2
\end{align*}
which is equivalent to \eqref{eq:JgA} and concludes the proof.\\
(ii) Consider the analogous of inclusion \eqref{eq:Res-Incl} for $\cB$, i.e.,
\begin{equation} \label{eq:Res-B-Sys}
\begin{cases}
\bx^+ - \boldsymbol{\gamma} M^\top \mu^+ - \bx &= 0 \\
\by^+ + \boldsymbol{\gamma} P^\top \lambda^+ - \by &= 0\\
\sigma^+ + \alpha \mu^+ - \sigma &= 0 \\
\mu^+ + \beta (M \bx^+ - \sigma^+) - \mu &=0 \\
\lambda^+ + \delta (\nc_{\R^m_{\geq 0 }}(\lambda^+) - P \by^+) - \lambda & \ni 0
\end{cases} 
\end{equation}
By substituting \eqref{eq:Res-B-Sys}b into \eqref{eq:Res-B-Sys}e and exploiting the equivalence $P \bgamma P^\top = \hat{\gamma} PP^\top$, we recast the latter inclusion as
\begin{multline} \textstyle \label{eq:lambdaPD}
\partial_{\lambda^+}\left[
\delta \iota_{\R^m_{\geq 0}}(\lambda^+) + \tfrac{1}{2} \delta \hat{\gamma} \norm{P^\top \lambda^+ - \frac{1}{\hat{\gamma}} y}^2 \right. \\
\left. +\tfrac{1}{2} \norm{\lambda^+ - \lambda }^2 \right] \ni 0.
\end{multline}
From \eqref{eq:lambdaPD}, it follows that
\begin{equation*} 
\lambda^+ = \underset{u \in \R^m_{\geq 0}}{\argmin} \; \frac{1}{2}\delta \hat{\gamma} \norm{ P^\top  u - \hat{\gamma}^{-1} \by}^2 +\frac{1}{2} \norm{u -\lambda}^2,
\end{equation*}
which is a quadratic problem, i.e.,
\begin{equation} \label{eq:lambdaQ} \textstyle
\lambda^+ = \underset{u \in \R^m_{\geq 0}}{\argmin} \; \frac{1}{2} u^{\top} Q u - c^\top u +d,  
\end{equation}
with $Q = \delta \hat{\gamma} P P^\top + I$, $c = \delta \by P^\top + \lambda $, $d = \delta^2 \bar{\gamma}^{-1} \by^\top \by + \lambda^\top \lambda$. The explicit solution to \eqref{eq:lambdaQ} reads as 
\begin{equation*}
\textstyle
\lambda^+ = {\proj}_{\R^m_{\geq 0}} Q^{-1}c = 
{\proj}_{\R^m_{\geq 0}} \frac{1}{1+\delta N \bar{\gamma}  } \left(
\lambda  +\delta P \by 
\right).
\end{equation*} 
The remaining equations in \eqref{eq:Res-B-Sys} can be solved by substitution, thus obtaining
\begin{equation}
\begin{cases}
\mu^+ = \frac{N}{(1+\beta \alpha )N + \beta \bar{\gamma}} \, (\mu + \beta(\sigma - M \bx) ) \\
\bx^+ = \bx + \bgamma M^\top \mu^+ \\
\by^+ = \by - \bgamma P^\top \lambda^+ \\
\sigma^+ = \sigma - \alpha \mu^+ 
\end{cases}
\end{equation}
\end{proof}

\smallskip
By exploiting \eqref{eq:JgA} and \eqref{eq:JgB}, we explicitly write \eqref{eq:DRsplit} for the mappings $\Gamma \mc A$ and $\Gamma \mc B$, with $\lambda_{k} = 1$ for all $k \in \mathbb{N}$, i.e.,
\begin{align*}
\text{(a)}& \quad \textstyle
\begin{cases}
x_i^{k+1/2}  = \underset{z \in \Omega_i}{\argmin} \;
f_i \left( z,\tilde{\sigma}^{k} \right) + \frac{1}{2\gamma_i} \norm{z-\tilde{x}_i^k}^2
\\[-.5em] \hspace*{2.6cm} + \frac{1}{2\gamma_i}  \norm{A_i z- b_i - \tilde{y}_i^k}^2, \; \forall i \in \Ncal  \\
y_i^{k+1/2} = A_i x_i^{k+1/2}-b_i, \quad \forall i \in \Ncal \\
\sigma^{k+1/2} = \tilde{\sigma}^k\\
\mu^{k+1/2} = \tilde{\mu}^k\\
\lambda^{k+1/2} = \tilde{\lambda}^k
\end{cases} \\[.5em]
\text{(b)}& \quad \textstyle
\begin{cases}
\tilde{x}_i^{k+1/2} = 2 x_i^{k+1/2}- \tilde{x}_i^k, \quad \forall i \in \Ncal \\
\tilde{y}_i^{k+1/2} = 2 A_i x_i^{k+1/2} - \tilde{y}_i^k, \quad \forall i \in \Ncal \\
\tilde{\sigma}^{k+1/2} = \tilde{\sigma}^k\\
\tilde{\mu}^{k+1/2} = \tilde{\mu}^k\\
\tilde{\lambda}^{k+1/2} = \tilde{\lambda}^k
\end{cases}
\end{align*}
\begin{align*}
\text{(c)}& \quad \textstyle
\begin{cases}
x_i^{k+1} = \tilde{x}_i^{k+1/2} + \gamma_i \frac{1}{N} \mu^{k+1}, \quad \forall i \in \Ncal  \\
y_i^{k+1} = \tilde{y}_i^{k+1/2} - \gamma_i \lambda^{k+1}, \quad \forall i \in \Ncal \\
\sigma^{k+1} = \tilde{\sigma}^k - \alpha \mu^{k+1}\\
\mu^{k+1} = \frac{N}{(1+\beta \alpha )N + \beta \hat{\gamma}} \, (\tilde{\mu}^k + \beta( \tilde{\sigma}^k - M \tilde{\bx}^{k+1/2})) \\
\lambda^{k+1}={\proj}_{\R^m_{\geq 0}} \frac{1}{1+\delta N \hat{\gamma}  } \left(
\tilde{\lambda}^{k}  +\delta P \tilde{\by}^{k+1/2} 
\right)
\end{cases} \\[.5em]
\text{(d)}& \quad \textstyle
\begin{cases}
\tilde{x}^{k+1}_i = x_i^{k+1/2} + \gamma_i \frac{1}{N} \mu^{k+1},  \quad \forall i \in \Ncal \\
\tilde{y}_i^{k+1} = y_i^{k+1/2} - \gamma_i \lambda^{k+1}, \quad \forall i \in \Ncal \\
\tilde{\sigma}^{k+1} = \sigma^{k+1} \\
\tilde{\mu}^{k+1} = \mu^{k+1}\\
\tilde{\lambda}^{k+1} = \lambda^{k+1}
\end{cases}
\end{align*}

Finally, by sorting and simplifying (a)--(d), and setting
\begin{align} \label{eq:deltaS}
\delta_\text{c} &:=\textstyle \frac{\delta}{\delta \hat{\gamma}+\frac{1}{N}}, & \delta \in \R_{>0} &\Rightarrow \delta_\text{c} \in (0,\hat{\gamma}^{-1}), \\
\textstyle \label{eq:betas}
\beta_\text{c} &:=\textstyle \frac{\beta}{1+\beta(\alpha+ \hat{\gamma}/N)},&
 \beta \in \R_{>0} & \textstyle \Rightarrow \beta_\text{c} \in (0,\frac{1}{\alpha + \hat{\gamma}/N}) ,
\end{align}
we obtain the iterations in Algorithm 1.

\smallskip
(2): The mappings $\mc A$ and $\mc B$ are maximally monotone, by Lemma \ref{lemm:Splitting}, and $\zer(\mc A+ \mc B) \neq \emptyset$, by Remark \ref{rem:Existence}.
Since $\Gamma= \Gamma^\top \succ 0$, then $\Gamma\mc A$ and $\Gamma\mc B$ are maximally monotone in the space defined by the norm $\norm{\cdot}_{\Gamma}$ and
$\zer (\Gamma\mc A + \Gamma\mc B) = \zer(\mc A  +\mc B) \neq \emptyset$. Therefore, we can apply \cite[Th. 26.11]{bauschke2017convex} to establish the global convergence of the sequence $\left( \col(\bs x^k, \bs y^k, \sigma^k, \mu^k, \lambda^k)    \right)_{k=0}^\infty$ generated by Algorithm 1, to some $ \col(\bs x^*, \bs y^*, \sigma^*, \mu^*, \lambda^*) \in \zer(  \mc A + \mc B) = \zer(\mc T)$. By Prop. \ref{prop:opT-vGNE}, $\bs x^*$ is a v-GNE of the extended game in \eqref{eq:EG_1}-\eqref{eq:EG_2}. To conclude, we recall Prop. \ref{prop:games} to show that $\bs x^*$ is also a v-GAE of the original game in \eqref{eq:Game}.
{\hfill $\blacksquare$}

\balance
\bibliographystyle{IEEEtran}
\bibliography{library}

\end{document}